\newcommand{\coleq}\coloneqq
\newcommand{\N}{\mathbb{N}}
\newcommand{\Z}{\mathbb{Z}}
\newcommand{\ud}{\mathrm{d}}
\newcommand{\R}{\mathbb{R}}
\newcommand{\bC}{{\mathbf{C}}}
\newcommand{\bI}{{\mathbf{I}}}
\newcommand{\bJ}{{\mathbf{J}}}
\newcommand{\bP}{{\mathbf{P}}}
\newcommand{\bQ}{{\mathbf{Q}}}
\newcommand{\bR}{{\mathbf{R}}}
\newcommand{\bT}{{\mathbf{T}}}
\newcommand{\bU}{{\mathbf{U}}}
\newcommand{\bV}{{\mathbf{V}}}
\newcommand{\bW}{{\mathbf{W}}}
\newcommand{\bX}{{\mathbf{X}}}
\newcommand{\bY}{{\mathbf{Y}}}
\newcommand{\bZ}{{\mathbf{Z}}}
\newcommand{\cC}{\mathcal{C}}
\newcommand{\cH}{\mathcal{H}}
\newcommand{\cB}{\mathcal{B}}
\newcommand{\norm}[1]{{\lVert#1\rVert}}
\let\Re\relax
\DeclareMathOperator{\Re}{Re}
\DeclareMathOperator{\im}{im}
\newcommand{\bv}{{\mathbf v}}
\newcommand{\bn}{{\mathbf n}}
\newcommand{\bx}{{\mathbf x}}
\newcommand{\dd}[1][x]{\,\operatorname{d}\!#1}
\newcommand{\ddt}{\frac{\dd[]}{\dd[t]}}
\newcommand{\mHC}{{m_{HC}}}
\newcommand{\mC}{{\mathbf{C}}}
\newcommand{\mCH}{\mC_H} 
\newcommand{\mCS}{\mC_S} 
\newcommand{\mI}{{\mathbf{I}}}
\newcommand{\mR}{{\mathbf{R}}}
\newcommand{\sphere}{\mathbb{S}}
\newcommand{\torus}{\mathbb{T}}
\newtheorem{theorem}{Theorem}[section]
\newtheorem{lemma}[theorem]{Lemma}
\newtheorem{corollary}[theorem]{Corollary}
\theoremstyle{definition}
\newtheorem{definition}[theorem]{Definition}
\theoremstyle{remark}
\newtheorem{remark}[theorem]{Remark}
\newtheorem{example} [theorem]{Example}
\begin{document}

\title{Long- and Short-Time Behavior of Hypocoercive Evolution Equations via Modal Decompositions}

\author{F.~Achleitner\thanks{Vienna University of Technology, Institute of Analysis and Scientific Computing, Wiedner Hauptstr. 8-10, A-1040 Wien, Austria, franz.achleitner@tuwien.ac.at},
A.~Arnold\thanks{Vienna University of Technology, Institute of Analysis and Scientific Computing, Wiedner Hauptstr. 8-10, A-1040 Wien, Austria, anton.arnold@tuwien.ac.at} \thanks{corresponding author},
V.~Mehrmann\thanks{Technische Universit\"at Berlin, Institut f.~Mathematik,  MA 4-5, Stra\ss{}e des 17.~Juni 136, D-10623 Berlin, mehrmann@math.tu-berlin.de}, 
and E.A.~Nigsch\thanks{Vienna University of Technology, Institute of Analysis and Scientific Computing, Wiedner Hauptstr. 8-10, A-1040 Wien, Austria, eduard.nigsch@tuwien.ac.at}
}

\date{\today}

\maketitle

\begin{abstract}
The long- and short-time behavior of solutions to dissipative evolution equations is studied by applying the concept of hypocoercivity. Aiming at partial differential equations that allow for a modal decomposition, we compute estimates that are uniform with respect to all modes. While the special example of the kinetic Lorentz equation was treated in 
previous work of the authors,
that analysis is generalized here to general evolution equations having a scaled family of generators.
\end{abstract}

{{\bf Keywords}: hypocoercivity, dissipative system, evolution equation, modal decomposition, decay rate, staircase form}

{{\bf AMS Subject classification:} 37L15,  37L05, 47D06, 35E05}

\section{Introduction}

This paper is concerned with linear, dissipative evolution equations of the form 
\begin{equation} \label{lineveq} 
 \dot x(t) = - \mC x(t), \quad t>0, \quad x(0)=x_0 \in \cH,
\end{equation}
with $x \colon [0,\infty) \to \cH$ and where $\mC$ is a linear operator on a separable Hilbert space $\cH$. 
We will study
the long- and short-time decay behavior of solutions $x(t)$, and this will be analyzed using hypocoercivity techniques. In recent years, this has been discussed in detail for many partial differential equations, and in particular for kinetic and Fokker-Planck equations; see \cite{ADSW21,MR3324910,He06,Villani}. The main goal of this paper is to extend the analysis of \eqref{lineveq} to a family of such equations that have the form
\begin{equation} \label{IVP_eta}
\dot x_\eta(t) = - \mC_\eta x(t), \quad x_\eta(0) = x_{\eta, 0} \in \cH,
\end{equation}
with the operators $\bC_\eta :=\bC_H +\eta \bC_S$ and
\[ 
  \bC_H := \frac{1}{2}(\bC + \bC^*) \textrm{ resp.\ } \bC_S := \frac{1}{2} ( \bC - \bC^*) 
\]
denoting the Hermitian and skew-Hermitian parts of $\bC$, respectively. The subsequent analysis will aim for a uniform decay behavior with respect to the scalar parameter $\eta\ge1$. Here, we consider only the case where $\eta$ is taken from some discrete countable set $E$, e.g. $E=\N$, but the extension to continuous values of $\eta$ is straightforward. In many applications, the family of equations \eqref{IVP_eta}, posed on the direct sum of Hilbert spaces, $H=\oplus_{\eta\in E} \cH$, arises from a modal decomposition of an original evolution problem. For typical examples we refer to \cite{AAC16,ADSW21,MR3324910}, where the scaled skew-Hermitian part, $\eta \bC_S$, arises via Fourier transformation of the kinetic transport term. Here $\eta$ is the wave number, and it is discrete for the position variable on a torus (as in \cite{AcArMeNi25}) and continuous for whole space cases (see \cite{BDMMS20}). In the latter case, the restriction $\eta\ge1$ is crucial, since low wave numbers do not give rise to exponential decay.

Let us illustrate this decomposition with a prototypical example taken from \cite[\S6]{AcArMeNi25}:
Consider the decay behavior of the solutions of the Lorentz kinetic equation 
\begin{equation}\label{lorentz}
 \partial_t f +\bv\cdot\nabla_\bx f = \sigma
 \left(\frac{1}{2\pi} \int_{\sphere^{1}} f\,\ud \bv - f \right) 
 =:  \sigma (\tilde f - f),\quad t>0
\end{equation}
for the phase space distribution $f(\bx,\bv,t)$ with $(\bx,\bv,t) \in \torus^2 \times \sphere^{1} \times \R^+$. 
Here, $\bv$ is the velocity and $\bx$ is the position variable, $\tilde f$
denotes the mean of $f$ w.r.t.\ the velocity sphere $\sphere^{1}$.
This is a linear Boltzmann equation with collision operator $\cC f \coleq \sigma (\tilde f - f)$, which is local in position $\bx$, and $\sigma>0$ is some relaxation rate. It describes the evolution of free particles (i.e., without external force) moving on the $2$-dimensional
torus $\torus^2$ with speed $1$ (since the particle collisions preserve the kinetic energy and hence $|\bv|$). Its 3D analog was originally considered to model the flow of electrons in a metal \cite{Lo05}. 
It is well-known (see e.g.~\cite[Theorem 3.1]{Golse}, \cite{hankwanleautaud,UPG}) that this equation exhibits exponential convergence to equilibrium.

To obtain qualitative results on the short- and long-time behavior of solutions of \eqref{lorentz}, in \cite{AcArMeNi25} the concept of \emph{hypocoercivity}   \cite{Villani} was employed: A Fourier transformation of  \eqref{lorentz} w.r.t.\ $\bx$ yields the family of mode equations 
\begin{equation}\label{mode_equations}
\partial_t f_\bn + i \bv \cdot \bn f_\bn = \sigma ( \tilde f_\bn - f_\bn),\quad \bn \in \Z^2,\quad t>0.
\end{equation}
While \eqref{lorentz} is posed on $L^2(\torus^2 \times \sphere^{1})$, \eqref{mode_equations} is posed on $H=\bigoplus_{\bn \in \Z^2} \cH$, with $\cH=L^2(\sphere^{1})$, and both spaces are isomorphic. 

Then, hypocoercivity methods (extensions of \cite{AAC16,AAM23ZAMM,Villani}) were applied to \eqref{mode_equations} to obtain short- and long-time decay estimates of $f_\bn(t)$, uniformly in $\bn$: 
\begin{lemma}\label{lem1}
For each mode $\bn = (n_1, n_2) \in \Z^2 \setminus\{0\}$ the following holds:
\begin{enumerate}
    \item[(a)] The hypocoercivity index (defined in \eqref{Op:hypocoercive:kappa} below) is 1 for each mode.
    \item[(b)] The norm of the solution to~\eqref{mode_equations} decays exponentially for long times like
    \begin{equation*}
        \|f_\bn(t)\|_{L^2(\sphere^{1})}
        \le \min\Big[1,\sqrt{\tfrac{2|\bn|+1}{2|\bn|-1}} e^{-\lambda_0 t} \Big] \|f_\bn(0)\|_{L^2(\sphere^{1})} \,,   
    \end{equation*}
    for $t\ge0$ with some explicit decay rate $\lambda_0>0$.
    \item[(c)] The propagator norm pertaining to \eqref{mode_equations} decays (algebraically) for short times like
\begin{equation*}
    \|\bP_\bn(t)\|_{\cB(L^2(\sphere^1))} \le 1-c t^3,\quad 0\le t\le \tau,
\end{equation*}
with some explicit, $\bn$-independent constants $c,\,\tau>0$. 
\end{enumerate}
\end{lemma}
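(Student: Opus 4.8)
The plan is to read off the Hermitian/skew-Hermitian splitting of the mode generator and then run the three parts as (a) a kernel-and-commutator computation, (b) a modified-norm (Lyapunov) estimate, and (c) a short-time Taylor expansion of the dissipation functional, keeping all constants uniform in $\bn$. Write the generator of \eqref{mode_equations} as $\bC_\bn=\bC_H+\bC_{S,\bn}$, where $\bC_H=\sigma(I-\Pi)$ with $\Pi f:=\tilde f$ the orthogonal projection of $L^2(\sphere^1)$ onto the constants, so that $\bC_H=\bC_H^*\ge0$ with $\ker\bC_H=\Span\{1\}$ (the constants), while $\bC_{S,\bn}=i(\bv\cdot\bn)=i|\bn|\cos(\theta-\varphi)$ (with $\varphi=\arg\bn$) is skew-Hermitian multiplication; by rotational symmetry I may take $\varphi=0$, so $|\bn|$ plays the role of the scaling parameter $\eta$. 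For (a) I would unwind the definition \eqref{Op:hypocoercive:kappa} of the hypocoercivity index $\mHC$ (roughly, the least $m$ for which the iterated commutators $T_0:=\bC_H$, $T_{j+1}:=[\bC_{S,\bn},T_j]$ satisfy $\bigcap_{j=0}^m\ker T_j=\{0\}$). Since $\bC_H$ is not positive definite, $\mHC\ge1$; and for a constant $c\,1\in\ker\bC_H$ one gets $T_1(c\,1)=\bC_{S,\bn}\bC_H(c\,1)-\bC_H\bC_{S,\bn}(c\,1)=-\sigma\,c\,i|\bn|\cos\theta$, nonzero for $c\neq0$ and $\bn\neq0$ because $\cos\theta$ has zero mean and therefore survives $I-\Pi$. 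Hence $\ker\bC_H\cap\ker T_1=\{0\}$ and $\mHC=1$; in the Fourier basis $e^{ik\theta}$ this is simply the fact that $\cos\theta$ couples the zero mode to the first harmonics, on which $\bC_H=\sigma I$.

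The ``$1$'' in the minimum of (b) is immediate from dissipativity, $\tfrac{\ud}{\ud t}\norm{f_\bn}^2=-2\Re\ip{\bC_\bn f_\bn}{f_\bn}=-2\ip{\bC_H f_\bn}{f_\bn}\le0$. For the exponential factor I would build a modified norm $\norm{f}_{\bP_\bn}^2:=\ip{\bP_\bn f}{f}$ with $\bP_\bn=I+\tfrac{1}{2|\bn|}B_\bn$, where the bounded self-adjoint $B_\bn$, $\norm{B_\bn}\le1$, couples the zero mode to the first harmonics (the directions in which $\bC_H$ is blind), and then verify the operator Lyapunov inequality $\bP_\bn\bC_\bn+\bC_\bn^*\bP_\bn\ge2\lambda_0\,\bP_\bn$ with an explicit $\bn$-independent $\lambda_0>0$. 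This gives $\norm{f_\bn(t)}_{\bP_\bn}\le e^{-\lambda_0 t}\norm{f_\bn(0)}_{\bP_\bn}$; since the spectrum of $\bP_\bn$ then lies in $[1-\tfrac{1}{2|\bn|},\,1+\tfrac{1}{2|\bn|}]$, returning to the $L^2$-norm costs exactly the condition-number factor $\sqrt{(2|\bn|+1)/(2|\bn|-1)}$, and combining with the contraction bound produces the stated minimum.

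For (c) I would expand $g(t):=\norm{\bP_\bn(t)f_0}^2$ on the worst-case data $f_0\in\ker\bC_H$, starting from $g'(t)=-2\ip{\bC_H f(t)}{f(t)}$. Because $\mHC=1$, both $g'(0)$ and $g''(0)$ vanish on $\ker\bC_H$: each differentiation produces a factor $\bC_H f_0=0$, and the surviving term uses $\bC_\bn f_0=\bC_{S,\bn}f_0$, so the first nonzero Taylor coefficient appears at order $t^3$, namely $\tfrac16 g'''(0)=-\tfrac23\norm{\bC_H^{1/2}\bC_{S,\bn}f_0}^2<0$, strictly negative by the index-$1$ nondegeneracy established in (a). Integrating yields $\norm{\bP_\bn(t)}_{\cB(L^2(\sphere^1))}\le1-c\,t^3$, i.e.\ the exponent $2\mHC+1=3$.

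The genuine obstacle throughout is uniformity in $\bn$. In (b) it is constructing $B_\bn$ and checking the Lyapunov inequality with a rate $\lambda_0$ and perturbation size $1/(2|\bn|)$ that hold simultaneously for all $\bn\ge1$; matching the sharp prefactor is precisely what forces the perturbation to be $1/(2|\bn|)$. In (c) the subtlety is that the cubic coefficient $\norm{\bC_H^{1/2}\bC_{S,\bn}f_0}^2$ scales like $|\bn|^2$, so the natural decay only accelerates with $|\bn|$: one fixes $c$ and $\tau$ from the slowest mode $|\bn|=1$ and must then check that $1-c\,t^3$ remains an upper envelope for every larger mode, where the validity window of the leading-order expansion shrinks like $1/|\bn|$. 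Controlling the Taylor remainder uniformly in $\bn$ — rather than the easy leading term — is the delicate point; the cleanest route is to bound $g(t)$ directly via the monotone dissipation $g'\le0$ together with the already-proven exponential estimate from (b).
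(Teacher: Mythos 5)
Your overall architecture coincides with the paper's (the paper itself imports Lemma~\ref{lem1} from \cite{AcArMeNi25} and proves the generalization, Theorem~\ref{thm1}): for (b) your ansatz $\bI+\tfrac{1}{2|\bn|}B_\bn$ is exactly the paper's $\bY_\eta$ in \eqref{Y_n} with $\alpha=\epsilon\sqrt{\lambda_{\max}(\bJ_{2,1}\bJ_{2,1}^*)}=\tfrac12$, which indeed reproduces the prefactor $\sqrt{(2|\bn|+1)/(2|\bn|-1)}$, and for (c) your closing remark (patch the cubic estimate with monotonicity and the exponential bound from (b)) is the paper's three-phase strategy. But two steps have genuine gaps. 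In (a), you deduce $\mHC=1$ from the commutator/kernel condition $\ker\bC_H\cap\ker[\bC_S,\bC_H]=\{0\}$, whereas the definition \eqref{Op:hypocoercive:kappa} requires the \emph{uniform} bound $\bC_H+\bC^*\bC_H\bC\ge\kappa\bI$ with $\kappa>0$. In infinite dimensions a nonnegative bounded operator with trivial kernel need not be bounded below, so the kernel statement alone does not give the index; you must exploit the quantitative structure of this example ($\dim\ker\bC_H=1$ and $\bC_H\ge\sigma\bI$ on $(\ker\bC_H)^\perp$) to produce an explicit $\kappa$, and then $\bn$-uniformity follows from monotonicity as in \eqref{est1b}. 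Relatedly, in (b) the Lyapunov inequality is asserted, not verified; the paper's verification (staircase form plus Schur complements) is the actual content of that step.

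The more serious gap is in (c). Expanding $g(t)=\|\bP_\bn(t)f_0\|^2$ only for $f_0\in\ker\bC_H$ gives the correct leading coefficient $\tfrac16 g'''(0)=-\tfrac23\|\bC_H^{1/2}\bC_{S,\bn}f_0\|^2$, but this is not an operator-norm bound: \eqref{Pn-decay} requires a supremum over \emph{all} unit data, and data close to, but not in, $\ker\bC_H$ must be handled uniformly; calling kernel data ``worst-case'' is unproven. Moreover, the passage from the pointwise cubic coefficient to an envelope $1-ct^3$ on a fixed interval $[0,\tau]$ with $\bn$-independent $c,\tau$ is precisely the hard content, and ``fix $c,\tau$ at $|\bn|=1$ and check larger modes'' does not work verbatim: the paper needs the three quantitative estimates \eqref{est1}--\eqref{est3} on $[0,\tfrac{\tau}{\eta}]$, $[\tfrac{\tau}{\eta},t_\eta]$, $[t_\eta,\tau]$, where the initial-phase estimate \eqref{est1} holds for general data and rests on the second uniform coercivity bound $\bR+\bC_\eta^*\bR\bC_\eta\ge\kappa_3\bI$ of Lemma~\ref{2HC-cond}, an ingredient your sketch never establishes, and where the constants $\delta$, $\tau$, $t_\eta$, the threshold $r$, and $c$ must be matched as in \eqref{def-tau1} and \eqref{def-c1} (note, e.g., that for $\eta$ close to $1$ one has $t_\eta>\tau$, so the exponential phase is empty there and the constant phase must carry the estimate). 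Without this construction, part (c) remains a plausible plan rather than a proof.
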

These combined estimates then led to analogous decay estimates for the original Lorentz equation \eqref{lorentz}. We note that this \emph{mode-by-mode hypocoercivity method} was already used, e.g.\ in \cite[\S II]{ADSW21}, for the long-time analysis of dissipative kinetic equations. But the corresponding short-time analysis was first studied
in \cite{AcArMeNi25}. 

In this paper we will show that these methods are not restricted to the example of the Lorentz equation but can be generalized to other classes of equations that allow for a similar modal decomposition --- in the form of \eqref{IVP_eta}. The proof of the long-time behavior (in \S\ref{sec:long}) and the short-time behavior (in \S\ref{sec:short}) is based on first representing the operator $\bC_\eta$ in \emph{staircase form}, see \cite{AAM23ZAMM}, e.g. Similar  block decompositions of operators into their bounded and unbounded parts also turned out to be advantageous in \cite{LHC2020}.




The paper is organized as follows. After introducing the preliminaries in Section~\ref{sec:prelim}, we present the main results in Section~\ref{sec:main}. 
We close with a summary and some open questions.



\section{Notation and Preliminaries}\label{sec:prelim}
We consider operators of the form $\bC = \bR - \bJ$, where the operators $\bR := \bC_H$ and $-\bJ := \bC_S$
have the same domains and form the self-adjoint (Hermitian) and skew-adjoint (skew-Hermitian) part of $\bC$, respectively. If $\bC$ is bounded, then the domains of $\bC_H$ and $\bC_S$ are trivially identical and equal to $\cH$, but 
assuming equality of domains will eventually allow us to generalize our setup to the case of unbounded $\bC$. 
Typically, we use $\bR$ and $\bJ$ instead of $\bC_H$ and $-\bC_S$ to improve the readability of complicated expressions. This is the common notation used for \emph{dissipative operators}, see, e.g. \cite{JacZ12,SchJ14}, where $\bC$ is \emph{accretive}, i.e. $\bR=\bC_H$ is positive semi-definite.

We will analyze the decay behavior of solutions using the concept of \emph{hypocoercivity} which was 
introduced in \cite{Villani} for the study of evolution equations of this form for which the (possibly unbounded) dissipative operator~$-\mC$ generates a \emph{uniformly exponentially stable $C_0$-semigroup} $(e^{-\mC t })_{t\geq 0}$; see e.g.~\cite[Section V.1, Eq.~(1.9)]{EngelNagel2000}.

\begin{definition}\label{def:hypoco}
Let $\bC$ be an (unbounded) operator on a separable Hilbert space $\cH$ generating a strongly continuous semigroup $(e^{-t\bC})_{t \ge 0}$, and let $\widetilde \cH$ be a Hilbert space continuously and densely embedded in $(\ker \bC)^\perp$, endowed with a Hilbertian norm $\norm{\cdot}_{\widetilde \cH}$. The operator $\bC$ is said to be \emph{hypocoercive} on $\widetilde \cH$ if there exists a finite constant $C$ and some $\lambda>0$ such that
\begin{equation}\label{HC-decay}
  \mbox{ for all } x_0 \in \widetilde \cH,\quad \mbox{ for all } t \ge 0:\quad \norm{e^{-t\bC} x_0}_{\widetilde \cH} \le Ce^{-\lambda t}\norm{x_0}_{\widetilde \cH}. 
\end{equation}
\end{definition}

In what follows, we will assume that we are working directly on $(\ker \bC)^\perp$ and hence $\ker \bC = \{0\}$, so we write $\cH$ in place of $\tilde \cH$.

Let us fix some notation for the remainder of this article. $\cH$ will denote a separable Hilbert space and $\cB(\cH)$ the space of bounded linear operators on $\cH$. An operator $\bC \in \cB(\cH)$ is called \emph{accretive} if $\Re  \langle \bC x, x \rangle \ge 0$ for all $x \in \cH$, i.e., the symmetric part of $\bC$ is positive semi-definite.
The~\emph{hypocoercivity index} (HC-index)~$m_{HC}=m_{HC}(\mC)$ of an accretive operator $\mC\in\cB(\cH)$ is defined as the smallest integer~$m\in\N_0$ (if it exists) such that 
\begin{equation}\label{Op:hypocoercive:kappa}
  \sum_{j=0}^m (\mC^*)^j \mCH \mC^j \ge\kappa\mI 
\end{equation}
for some $\kappa>0$.

\begin{remark}\label{equiv-HCcond}
Using the equivalence of the conditions in \cite[Lemma 2.6]{AcArMeNi25},  this definition could also be based on the coercivity of $\sum_{j=0}^m \mCS^j \mCH (\mCS^*)^j$.
\end{remark}

As a practical and just newly found consequence, condition \eqref{Op:hypocoercive:kappa} directly yields a strictly decaying Lyapunov functional for \eqref{IVP_eta}, namely
\begin{equation}\label{P-lyap}
  \|x\|_\bP^2 := \langle x,\bP x\rangle_\cH,
\end{equation}

with the bounded operator $\bP:=\sum_{j=0}^m (\mC^*)^j  \mC^j \ge\mI$. This is easily verified by
\begin{equation}\label{P-est}
  \ddt \|x\|_\bP^2 =-2 \sum_{j=0}^m \langle\mC^j x, \mCH \mC^j x\rangle_\cH \le -2\kappa \|x\|_\cH^2 \le -\frac{2\kappa}{\|\bP\|_{\cB(\cH)}} \|x\|_\bP^2 \,.
\end{equation}
The following example illustrates that this Lyapunov functional \eqref{P-lyap} typically does \emph{not} yield the sharp decay rate, even for simple matrix cases.
\begin{example}
The matrix $\bC:=\begin{bmatrix}
1 & -1 \\
1 & 0
\end{bmatrix}$ 
has the eigenvalues $\lambda =\frac12 \pm i\frac{\sqrt3}{2}$, $m_{HC}=1$,  and $\kappa=1$. It implies $\bP=\begin{bmatrix}
3 & -1 \\
-1 & 2
\end{bmatrix}$ with $\|\bP\|_2=\frac{5+\sqrt5}{2}$. Hence, \eqref{P-est} yields decay of $\|x(t)\|_\bP$ with the sub-optimal rate $\frac{2}{5+\sqrt5}\approx 0.276 < \frac12=\Re(\lambda)$.  \hfill\qed
\end{example}

For any choice of the parameter $\eta$, we set
\begin{equation} 
\bC_\eta = \bC_H + \eta \bC_S = \bR - \eta \bJ. \label{split_eta}
\end{equation}
We call $\bP(t):=e^{-t\bC}$ the \emph{propagator} of $-\bC$ and denote by $\bP_\eta$ the propagator of $-\bC_\eta$.


\section{Main Result}\label{sec:main}

As a starting point for our analysis, consider an accretive operator $\bC\in\cB(\cH)$ with hypocoercivity index $\mHC=1$. Here, $\bC$ can be understood as pertaining to the mode $\eta=1$ from \eqref{IVP_eta}. Hence, $\bC$ is hypocoercive (see \cite[Theorem 4.6]{AcArMeNi25}) and its propagator $\bP(t)$ in norm decays exponentially for long times, see \eqref{HC-decay}. Furthermore, it exhibits algebraic short-time decay like 
$$
  \|\bP(t)\| = 1-ct^3+o(t^3) \quad \mbox{for } t\to0,
$$
see Theorem 4.1 in \cite{AcArMeNi25}.

The goal of this paper is to prove that the family $\bP_\eta(t)$, $t\ge0$ obeys analogous long- and short-time decay estimates, uniformly in $\eta\ge1$. For simplicity, we discuss here only the case $\mHC=1$ (which occurs for the Lorentz equation, see \cite[\S6]{AcArMeNi25}), but we expect the same behavior to also hold for larger hypocoercivity indices. 

In the following, we make two assumptions for the self-adjoint operator $\mR$. Without loss of generality we may assume that $\|\mR\|=1$. This can always be achieved by an appropriate scaling of the time. We also assume that the uniform bound $\mR \ge \gamma \mI$, with some $\gamma>0$, holds on $(\ker \mR)^\perp$.

Our main result is the following theorem, which generalizes Lemma 6.2 from \cite{AcArMeNi25} to more generic evolution equations.

\begin{theorem}\label{thm1}
Let $\bC\in\cB(\cH)$ be an accretive operator with $\|\bC_H\|=1$, hypocoercivity index $\mHC=1$, i.e., there exists $\kappa>0$ such that $\bC_H +\bC^* \bC_H \bC \geq \kappa \bI$, and let the kernel of the Hermitian part $\bC_H$ be finite dimensional, i.e., $\dim \ker \bC_H <\infty$ and satisfy $\bC_H \ge \gamma \mI$ on $(\ker \bC_H)^\perp$ for some $\gamma>0$.
Then, the family of operators $\bC_\eta$, $\eta\geq 1$ as in \eqref{split_eta} satisfies the following assertions:
\begin{enumerate}
    \item[(a)] The hypocoercivity index satisfies $m_{HC}(\bC_H + \eta\bC_S) = 1$ uniformly in $\eta\ge1$. 
    \item[(b)] The norm of the solution to~\eqref{IVP_eta} 
    decays exponentially for long time like
    \begin{equation}\label{unif-decay}
        \|x_\eta(t)\|_{\cH}
        \le \min\Big[1,\sqrt{\tfrac{\eta+\alpha}{\eta-\alpha}} e^{-\lambda_\eta t} \Big] \|x_\eta(0)\|_{\cH}, \quad t\ge0,    
    \end{equation}
    where the (non-sharp) rate $\lambda_{\eta}\geq \lambda_0>0$ and $\alpha\in(0,1)$ are specified in the proof, see \eqref{lambda0+alpha}.
    \item[(c)] The propagator norms decay (algebraically) for short time like
\begin{equation}\label{Pn-decay}
    \|\bP_\eta(t)\|_{\cB(\cH)} \le 1-c t^3,\quad 0\le t\le \tau,
\end{equation}
and the $\eta$-independent constants $c,\,\tau>0$ are given explicitly in the proof, 
see \eqref{def-c1} and \eqref{def-tau1}, respectively.
\end{enumerate} 
\end{theorem}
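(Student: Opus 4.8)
The plan is to prove the three assertions separately, after a common reduction to the block (staircase) form adapted to $\ker\bC_H$. Writing $\bR=\bC_H$ and $\bJ=-\bC_S$, I would decompose $\cH=\cH_0\oplus\cH_1$ with $\cH_0=\ker\bR$ (finite-dimensional by hypothesis) and $\cH_1=(\ker\bR)^\perp$, so that $\bR=\mathrm{diag}(0,\bR_1)$ with $\bR_1\ge\gamma\bI$ and $\|\bR_1\|=1$, while $\bJ$ acquires coupling blocks $\bJ_{10}\colon\cH_0\to\cH_1$ and $\bJ_{01}=-\bJ_{10}^*$. Read through the equivalent coercivity of Remark~\ref{equiv-HCcond}, the index-one hypothesis forces the coupling to be bounded below, $\bJ_{10}^*\bJ_{10}\ge\beta\bI$ on $\cH_0$ for some $\beta>0$. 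Throughout I will use that $\bC_\eta$ is again accretive (its Hermitian part is $\bR\ge0$), so $-\bC_\eta$ generates a contraction semigroup and $\|\bP_\eta(t)\|\le1$; this already supplies the $1$ inside the minimum in \eqref{unif-decay} and the trivial side of \eqref{Pn-decay}.

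For (a) I would avoid expanding the fourth-order expression in \eqref{Op:hypocoercive:kappa} and instead use the equivalent characterization of Remark~\ref{equiv-HCcond}. Since $(\bC_\eta)_H=\bR$ and $(\bC_\eta)_S=\eta\bC_S=-\eta\bJ$, the index-one condition for $\bC_\eta$ becomes $\bR+\eta^2\,\bJ\bR\bJ^*\ge\kappa_\eta\bI$. Both summands are non-negative, and for $\eta\ge1$ one has $\bR+\eta^2\bJ\bR\bJ^*\ge\bR+\bJ\bR\bJ^*\ge\kappa\,\bI$, where $\kappa>0$ is the constant coming from the assumed index-one property of $\bC=\bC_1$. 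Hence $\mHC(\bC_\eta)\le1$ with a constant $\kappa_\eta\ge\kappa$ uniform in $\eta$; it cannot drop to $0$, since $\ker\bR\neq\{0\}$ (otherwise $\mHC(\bC)=0$, contradicting the hypothesis). Thus $\mHC(\bC_\eta)=1$ uniformly.

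For (b) I would build an $\eta$-dependent Lyapunov functional of the form $\bP_\eta=\bI+\tfrac{\delta}{\eta}\mathbf{M}$ with a fixed bounded Hermitian $\mathbf{M}$ and small $\delta>0$, where $\mathbf{M}$ is the off-diagonal operator with blocks $\mathbf{M}_{10}=-\bJ_{10}$, $\mathbf{M}_{01}=\bJ_{01}$, chosen so that the commutator $\bJ\mathbf{M}-\mathbf{M}\bJ$ is strictly positive on $\cH_0$ (its $(0,0)$-block equals $2\bJ_{10}^*\bJ_{10}\ge2\beta\bI$). A direct computation gives
\[
  \bP_\eta\bC_\eta+\bC_\eta^*\bP_\eta
  =2\bR+\delta(\bJ\mathbf{M}-\mathbf{M}\bJ)+\tfrac{\delta}{\eta}(\mathbf{M}\bR+\bR\mathbf{M}),
\]
where the last, purely off-diagonal, term does not affect the $\cH_0$-coercivity. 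For $\delta$ small the block operator $2\bR+\delta(\bJ\mathbf{M}-\mathbf{M}\bJ)$ is coercive by a Young/Schur-complement estimate: its diagonal blocks are bounded below (of order $2\delta\beta$ on $\cH_0$ and $\gamma$ on $\cH_1$) while all off-diagonal blocks are $O(\delta)$ and absorbable. The residual $O(1/\eta)$ terms are then dominated using $\eta\ge1$, yielding $\bP_\eta\bC_\eta+\bC_\eta^*\bP_\eta\ge2\lambda_\eta\bP_\eta$ with $\lambda_\eta\ge\lambda_0>0$ independent of $\eta$. This gives $\tfrac{\mathrm{d}}{\mathrm{d}t}\|x_\eta\|_{\bP_\eta}^2\le-2\lambda_\eta\|x_\eta\|_{\bP_\eta}^2$, and since $\|\bP_\eta-\bI\|\le\delta\|\mathbf{M}\|/\eta=:\alpha/\eta$, the equivalence $(1-\tfrac{\alpha}{\eta})\bI\le\bP_\eta\le(1+\tfrac{\alpha}{\eta})\bI$ converts $\bP_\eta$-decay into \eqref{unif-decay} with prefactor $\sqrt{(\eta+\alpha)/(\eta-\alpha)}$; taking $\alpha=\delta\|\mathbf{M}\|<1$ keeps $\alpha\in(0,1)$ and $\eta-\alpha>0$, which fixes $\lambda_0,\alpha$ as promised in \eqref{lambda0+alpha}.

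For (c) the starting point is $\|x_\eta(t)\|^2=1-2\int_0^t\langle\bR x_\eta(s),x_\eta(s)\rangle\,\mathrm{d}s$ for a unit initial datum, so one must lower-bound the accumulated dissipation. The worst case is $x_\eta(0)\in\ker\bR$, where the first two derivatives of $\phi(s):=\langle\bR x_\eta(s),x_\eta(s)\rangle$ vanish at $s=0$ while one computes $\phi''(0)=2\eta^2\langle\bR\bJ x_\eta(0),\bJ x_\eta(0)\rangle\ge2\eta^2\gamma\beta>0$, which accounts for the cubic power. The genuine difficulty, and the main obstacle of the theorem, is making this estimate uniform in $\eta$: a naive Taylor remainder is useless, since higher $s$-derivatives of $\phi$ carry higher powers of $\eta$ and would force a horizon $\tau\sim1/\eta$. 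The resolution is that increasing $\eta$ only accelerates the decay (the leading coefficient grows like $\eta^2\ge1$), so I would establish the bound $\|x_\eta(t)\|^2\le1-2ct^3$ non-asymptotically on a fixed interval $[0,\tau]$ by closing a system of differential inequalities for $\|x_\eta\|^2$, $\langle\bR x_\eta,x_\eta\rangle$ and the coupling quantity $\Re\langle\bJ x_\eta,\bR x_\eta\rangle$, whose $\eta$-dependence is sufficiently homogeneous that the constants $c,\tau>0$ can be chosen independently of $\eta\ge1$. Combining with $\|\bP_\eta(t)\|\le1$ then yields \eqref{Pn-decay} with the explicit $c,\tau$ of \eqref{def-c1} and \eqref{def-tau1}.
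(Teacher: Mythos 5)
Your parts (a) and (b) are sound. Part (a) is the paper's argument essentially verbatim: the equivalent characterization of Remark~\ref{equiv-HCcond} plus monotonicity of $\bR+\eta^2\bJ\bR\bJ^*$ in $\eta\ge1$ (your added remark that the index cannot drop to $0$ because $\ker\bR\neq\{0\}$ is a point the paper leaves implicit). Part (b) follows the same strategy as the paper --- a perturbed inner product $\bI+O(\epsilon/\eta)\cdot(\text{off-diagonal coupling})$, positivity of the Lyapunov operator via Schur-complement/Young absorption, then Gronwall --- but with a coarser decomposition: you split only $\cH=\ker\bR\oplus(\ker\bR)^\perp$ and extract the needed coercivity $\bJ_{10}^*\bJ_{10}\ge\beta\bI$ on $\ker\bR$ directly from the index-one condition, whereas the paper refines $(\ker\bR)^\perp$ further into the staircase form \eqref{SF_refined} so that the coupling block $\bJ_{2,1}$ becomes a square nonsingular matrix between finite-dimensional spaces. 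Your computations (the $(0,0)$-block $2\bJ_{10}^*\bJ_{10}$ of the commutator, the purely off-diagonal $\mathbf{M}\bR+\bR\mathbf{M}$, the prefactor $\sqrt{(\eta+\alpha)/(\eta-\alpha)}$) check out, and your route is leaner; it even appears not to need $\dim\ker\bR<\infty$ in this step.

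Part (c) has a genuine gap. You correctly identify the obstruction --- Taylor remainders carry powers of $\eta$ and force a horizon $\tau/\eta$ --- but your proposed resolution (``closing a system of differential inequalities \dots whose $\eta$-dependence is sufficiently homogeneous'') is an assertion, not an argument, and as stated it cannot work on a fixed interval. The reason is quantitative: starting from a unit vector in $\ker\bR$, the norm drop that the dissipation can produce during the initial phase is of order $\eta^2\,(\tau/\eta)^3=\tau^3/\eta$, which tends to $0$ as $\eta\to\infty$, while the target bound $1-c\tau^3$ at $t=\tau$ requires a drop of \emph{fixed} size $c\tau^3$. Hence no closed set of differential inequalities with $\eta$-uniform constants acting on $[0,\tau]$ alone can deliver \eqref{Pn-decay}; for large $\eta$, most of the required decay must be supplied by the long-time mechanism. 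This is precisely what the paper does and what your proposal omits: it splices three regimes via the semigroup property --- the algebraic estimate \eqref{est1} on the shrinking interval $[0,\tau/\eta]$, the resulting constant bound \eqref{est2} on $[\tau/\eta,t_\eta]$, and, crucially, the exponential estimate of part (b) with prefactor $\sqrt{(\eta+\alpha)/(\eta-\alpha)}\to1$ on $[t_\eta,\tau]$, see \eqref{est3} --- and then verifies that each piece is dominated by $1-ct^3$ with the uniform constants \eqref{def-tau1}, \eqref{def-c1}. Your proof of (c) never invokes part (b) at all; the contraction bound $\|\bP_\eta(t)\|\le1$ that you do invoke only yields the trivial estimate, so the essential ingredient of the uniformization is missing.
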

\begin{remark}
An accretive operator $\bC\in\cB(\cH)$ with hypocoercivity index~$\mHC=1$ may not satisfy the condition $\bC_H \ge \gamma \mI$ on $(\ker \bC_H)^\perp$ for some $\gamma>0$, see the example given in~\cite[Remark 2.5(i)]{AcArMeNi25}.    
\end{remark}
\begin{proof}[Proof of Theorem~\ref{thm1}~(a)]
The assumption that $\mHC(\bC)=1$ and the Remark \ref{equiv-HCcond} imply that $\bR + \bJ \bR \bJ^* \ge \kappa_1 \bI$ for some $\kappa_1>0$. Hence 
\begin{equation} \label{est1b}
 \bR + \eta^2 \bJ \bR \bJ^* 
\geq \bR + \bJ \bR \bJ^* 
\geq \kappa_1 \bI
\end{equation}
proves statement $(a)$ for $\eta\geq 1$.

The proofs of parts $(b)$ and $(c)$ will be presented in the following two subsections.
\end{proof}

The decay behavior of all modes directly translates into a collective decay of the whole system described by $\bx(t):=\big(x_\eta(t)\big)_{\eta\in E}$ in $H=\bigoplus_{\eta \in E} \cH$ using $\|\bx\|_H^2 = \sum_{\eta\in E} \|x_\eta\|_\cH^2$:
%
\begin{corollary}
    Under the assumptions of Theorem \ref{thm1}, the solution of \eqref{IVP_eta} satisfies
\begin{eqnarray*}
    \|\bx(t)\|_{H}
    &\le& \min\Big[1,\sqrt{\tfrac{1+\alpha}{1-\alpha}} e^{-\lambda_0 t} \Big] \|\bx(0)\|_{H}, \quad t\ge0,  \\
    \|\bx(t)\|_{H} 
    &\le& (1-c t^3) \, \|\bx(0)\|_{H},\quad 0\le t\le \tau,
\end{eqnarray*}  
with $\lambda_0$ given in~\eqref{lambda0} below.
\end{corollary}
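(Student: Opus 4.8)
The plan is to derive both collective estimates by applying the per-mode bounds of Theorem~\ref{thm1}(b),(c) termwise and then summing over $\eta$, exploiting the orthogonal (Pythagorean) structure $\|\bx\|_H^2=\sum_{\eta\in E}\|x_\eta\|_\cH^2$ of the direct-sum norm. The key observation is that the mode-dependent quantities appearing in Theorem~\ref{thm1}(b) admit uniform bounds over $\eta\ge1$, so that every mode decays at least as fast as a single $\eta$-independent envelope; once this uniformization is in place, the summation is immediate.

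First, for the long-time estimate I would note that the prefactor $\sqrt{\tfrac{\eta+\alpha}{\eta-\alpha}}$ is monotonically decreasing in $\eta$ for $\eta\ge1$ (since $\alpha\in(0,1)$ one has $\tfrac{\eta+\alpha}{\eta-\alpha}=1+\tfrac{2\alpha}{\eta-\alpha}$), hence it is maximized at $\eta=1$, and that the bound $\lambda_\eta\ge\lambda_0$ gives $e^{-\lambda_\eta t}\le e^{-\lambda_0 t}$. Consequently the per-mode factor satisfies
\begin{equation*}
  \sqrt{\tfrac{\eta+\alpha}{\eta-\alpha}}\, e^{-\lambda_\eta t}
  \le \sqrt{\tfrac{1+\alpha}{1-\alpha}}\, e^{-\lambda_0 t}
  \eqcol g(t),\qquad \eta\ge1,\ t\ge0,
\end{equation*}
and, taking the minimum with $1$ on both sides, $\min[1,\sqrt{\tfrac{\eta+\alpha}{\eta-\alpha}}e^{-\lambda_\eta t}]\le\min[1,g(t)]$. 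Squaring the mode-wise estimate \eqref{unif-decay}, summing over $\eta\in E$ and pulling out the common factor $\min[1,g(t)]^2$ then yields $\|\bx(t)\|_H^2\le\min[1,g(t)]^2\,\|\bx(0)\|_H^2$, and taking a square root delivers the first claimed inequality.

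For the short-time estimate I would use that the propagator bound in Theorem~\ref{thm1}(c) holds with $\eta$-independent constants $c,\tau$. Writing $x_\eta(t)=\bP_\eta(t)x_\eta(0)$ and applying $\|x_\eta(t)\|_\cH\le\|\bP_\eta(t)\|_{\cB(\cH)}\|x_\eta(0)\|_\cH\le(1-ct^3)\|x_\eta(0)\|_\cH$ for $0\le t\le\tau$, I would again square, sum in $\eta$, and take a square root. The only point requiring care here is that squaring preserves the ordering only between nonnegative quantities, so one must ensure $1-ct^3\ge0$ on $[0,\tau]$; this holds provided $\tau$ is chosen with $c\tau^3\le1$, which is consistent with the explicit choice of $\tau$ furnished by Theorem~\ref{thm1}(c).

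The argument is thus essentially a uniformization-then-summation, and the only genuine obstacle is the uniform control of the long-time envelope, namely verifying that neither the prefactor nor the exponential rate degrades as $\eta\to\infty$. Both facts are already encoded in Theorem~\ref{thm1}(b) --- the monotonicity of $\sqrt{\tfrac{\eta+\alpha}{\eta-\alpha}}$ in $\eta$ and the lower bound $\lambda_\eta\ge\lambda_0$ --- so no estimate beyond invoking the theorem and the elementary $\ell^2$-summation is required.
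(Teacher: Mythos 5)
Your proposal is correct and follows exactly the paper's (implicit) argument: the paper also obtains the corollary by dominating each mode's estimate \eqref{unif-decay} with the $\eta$-independent envelope at $\eta=1$ (using monotonicity of $\sqrt{\tfrac{\eta+\alpha}{\eta-\alpha}}$ and $\lambda_\eta\ge\lambda_0$), invoking the uniform constants $c,\tau$ from \eqref{Pn-decay}, and summing the squared per-mode bounds over $\eta\in E$ via $\|\bx\|_H^2=\sum_{\eta\in E}\|x_\eta\|_\cH^2$. Your side remark on needing $1-ct^3\ge0$ is also automatically satisfied, since $1-ct^3$ bounds the norm of the invertible operator $\bP_\eta(t)=e^{-t\bC_\eta}$ from above on $[0,\tau]$.
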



\subsection{Proof of long-time behavior}\label{sec:long}
\begin{proof}[Proof of Theorem~\ref{thm1} (b)]
First, we derive a suitable representation of the accretive bounded operator $\bC$ in \emph{staircase form} (see Equation~\eqref{SF_refined} below).
Then, we  construct a positive self-adjoint operator $\bY\in\cB(\cH)$ such that $\|h\|_\bY^2 :=\langle h,\bY h\rangle$ is a strict Lyapunov functional for the evolution of~\eqref{lineveq}.\\

\noindent
\underline{Step 1 (Derivation of the staircase form):} We first recall the staircase form of $\bC=\bR-\bJ$, which corresponds to the mode $\eta=1$ in \eqref{IVP_eta}. This will then carry over verbatim to $\bC_\eta=\bR-\eta\bJ$. 
Due to~\cite[Lemma 5.1]{AcArMeNi25}, an accretive operator $\bC=\bR-\bJ\in \cB(\cH)$ with $\mHC(\bC)=1$ has the following representation:

For $\bT\in\cB(\cH)$, we recall the identities $\ker \bT = (\im \bT^*)^\perp$, $(\ker \bT)^\perp = \overline{\im \bT^*}$. 
Then, we view $\bR$ as an operator $\bR \colon (\ker \bR)^\perp \oplus \ker \bR \to \overline{\im \bR} \oplus (\im \bR)^\perp$
and set
\[
\cH^1_1 \coleq (\ker \bR)^\perp = \overline{\im \bR}, 
\qquad \quad
\cH^1_2 \coleq \ker \bR = (\im \bR)^\perp
\]
to write $\bR, \bJ \in \cB(\cH^1_1 \oplus \cH^1_2)$ in components as follows:
\begin{equation} \label{SF_original} 
\bR = \begin{bmatrix}
\bR_{1,1}^1 & 0 \\
0 & 0
\end{bmatrix}, \qquad
\bJ = \begin{bmatrix}
\bJ^1_{1,1} & \bJ^1_{1,2} \\
\bJ^1_{2,1} & \bJ^1_{2,2}
\end{bmatrix}.
\end{equation}
Then by the assumption on $\bR=\bC_H$ that $\bR\ge\gamma \bI$ on $(\ker \bR)^\perp$,  we have $\bR_{1,1}^1\geq \gamma \bI$ on $\cH_1^1$ for some $\gamma>0$.

As in~\cite[Lemma 1]{AAM23ZAMM}, we decompose further: $\cH_1^1 =\cH_0 \oplus \cH_1$ where 
\[
 \cH_0 \coleq \ker \bJ_{2,1}^1,
 \qquad \quad
 \cH_1 \coleq \cH_0^\perp \quad \text{(in $\cH_1^1$)},
 \qquad \quad
 \cH_2 :=\cH_2^1,
\]
such that $\bJ_{2,1}^1$ has the representation 
\[
 \bJ_{2,1}^1 \colon 
  \cH_1^1 =\cH_0 \oplus \cH_1 \to \cH_2^1, \quad
  \bJ_{2,1}^1 =[0\quad \bJ_{2,1}].
\]
Here, we have  $\bJ_{2,1}\colon\cH_1 \to\cH_2$ and
\begin{equation}\label{dimfinite}
\dim \cH_1 =\dim \cH_2 < \infty.
\end{equation}
Hence, $\bJ_{2,1}$ can be represented by a square matrix.
Due to the hypocoercivity of~$\bC$ and~\cite[Remark 5.2]{AcArMeNi25}, the matrix $\bJ_{2,1}$ is nonsingular.

Using the decomposition $\cH_1^1 =\cH_0 \oplus \cH_1$ such that $\cH =\cH_1^1 \oplus \cH_2^1 = \cH_0 \oplus \cH_1 \oplus \cH_2$, we refine the staircase form~\eqref{SF_original} and obtain
\begin{equation} \label{SF_refined} 
\bR = \left[\begin{array}{cc|c}
\bR_{0,0} & 0 & 0 \\
0 & \bR_{1,1} & 0 \\
\hline
0 & 0 & 0
\end{array}\right], \qquad
\bJ = \left[\begin{array}{cc|c}
\bJ_{0,0} & -\bJ_{1,0}^* & 0 \\
\bJ_{1,0} & \bJ_{1,1} & -\bJ_{2,1}^* \\
\hline
0 & \bJ_{2,1} & \bJ_{2,2}
\end{array}\right]\,,
\end{equation}
where $\bJ_{2,2} =\bJ^1_{2,2}$.
For future reference, we recall that $\cH_0$ may be infinite-dimensional.\\

\noindent
\underline{Step 2 (Ansatz  and spectrum of $\bY_\eta$):}
Due to Theorem \ref{thm1}(a), the operators $\bC_\eta =\bR-\eta\bJ$, $\eta\geq 1$ are hypocoercive with HC-index $\mHC(\bC_\eta)=1$. 
Consider for some $\epsilon>0$ the ansatz
\begin{equation} \label{Y_n}
 \bY_\eta
:=\left[\begin{array}{cc|c}
 \bI & 0 & 0 \\
 0 & \bI & \tfrac{\epsilon}{\eta} \bJ_{2,1}^* \\
\hline 
 0 & \tfrac{\epsilon}{\eta} \bJ_{2,1} & \bI  
 \end{array}\right]\,,
\end{equation}
where 
$\bI$ denotes the identity on the respective Hilbert spaces $\cH_i$, $i=0,1,2$.
For all $\eta\geq 1$, the bounded operators $\bY_\eta$ are self-adjoint.

Moreover, if $\epsilon>0$ is sufficiently small then the operators $\bY_\eta$, $\eta\geq 1$ are positive. 
To study the spectrum of $\bY_\eta$, we consider the representation 
\begin{equation} \label{Y_n+X_n}
 \bY_\eta
=\left[\begin{array}{cc}
 \bI_{\cH_0} & 0 \\
 0 & \bX_\eta 
 \end{array}\right]
\qquad\text{with } 
 \bX_\eta
=\left[\begin{array}{cc}
 \bI & \tfrac{\epsilon}{\eta} \bJ_{2,1}^* \\
 \tfrac{\epsilon}{\eta} \bJ_{2,1} & \bI  
 \end{array}\right]
 =\bI +\tfrac{\epsilon}{\eta} 
\underbrace{\begin{bmatrix}
 0 & \bJ_{2,1}^* \\
 \bJ_{2,1} & 0  
 \end{bmatrix}}_{=:\bZ_\eta} \,,
\end{equation}
where each block matrix of $\bX_\eta\in\cB(\cH_1\oplus\cH_2)$ is of dimension $n:=\dim \cH_1 =\dim \cH_2 \in\N$. 
Hence, each $\bY_\eta$, $\eta\ge1$ only has a pure point spectrum: the eigenvalue 1 due to $\bI_{\cH_0}$ and the eigenvalues of $\bX_\eta$.

It is straightforward to see that, for each eigenvalue $\lambda$ of the Hermitian matrix $\bZ_\eta$, $\lambda^2$ is an eigenvalue of $\bJ_{2,1}\bJ_{2,1}^*$. 
This yields the following estimate for the eigenvalues $\lambda_i(\bZ_\eta)$ of $\bZ_\eta$:
\[ 
 -\frac{\epsilon}{\eta} \sqrt{\lambda_{\max}\big(\bJ_{2,1}\bJ_{2,1}^*\big)}
\leq \frac{\epsilon}{\eta} \lambda_i(\bZ_\eta)
\leq \frac{\epsilon}{\eta} \sqrt{\lambda_{\max}\big(\bJ_{2,1}\bJ_{2,1}^*\big)}\,,
\]
and hence
\[ 
 1-\frac{\epsilon}{\eta} \sqrt{\lambda_{\max}\big(\bJ_{2,1}\bJ_{2,1}^*\big)}
\leq \lambda_i(\bX_\eta)
\leq 1 +\frac{\epsilon}{\eta} \sqrt{\lambda_{\max}\big(\bJ_{2,1}\bJ_{2,1}^*\big)}
\]
for $i=1,\ldots,2n$.
Thus we obtain the estimates
\begin{equation} \label{EV_Yn}
 0<1-\frac{\epsilon}{\eta} \sqrt{\lambda_{\max}\big(\bJ_{2,1}\bJ_{2,1}^*\big)}
\leq \lambda_{\min}(\bY_\eta)
\leq \lambda_{\max}(\bY_\eta)
\leq  1+\frac{\epsilon}{\eta} \sqrt{\lambda_{\max}\big(\bJ_{2,1}\bJ_{2,1}^*\big)}
\end{equation}
for all $\eta\geq 1$. For the proof of positivity of the self-adjoint operators $\bY_\eta$, $\eta\geq 1$, we have used here the sufficient condition 
\begin{equation} \label{condition_0}
 \frac14 -\epsilon^2 \lambda_{\max}\big(\bJ_{2,1} \bJ_{2,1}^*\big) \ge 0 \,.
\end{equation}
For later usage in \eqref{lambda0+alpha}, this condition is stricter than needed in \eqref{EV_Yn}.

\smallskip
\noindent
\underline{Step 3 (Checking the Lyapunov matrix inequality):} 
We show that, for sufficiently small $\epsilon>0$, there exists a constant $\kappa_2>0$ (independent of $\eta$) such that
\begin{equation} \label{LMI}
 \bQ_\eta :=\bC_\eta^* \bY_\eta +\bY_\eta \bC_\eta \geq 2\kappa_2 \bI 
\qquad \text{for all $\eta\geq 1$.}
\end{equation}
To derive sufficient conditions on $\epsilon>0$ and $\kappa_2>0$, we check the uniform positivity of the self-adjoint operator $\bQ_\eta -2\kappa_2 \bI$, $\eta\geq 1$ using the characterization via Schur complements, see e.g.~\cite{St05,Tr08}.
The self-adjoint operator $\bQ_\eta -2\kappa_2 \bI$, $\eta\geq 1$ is given as 
\begin{equation*}
\begin{split}
& \bQ_\eta -2\kappa_2 \bI
\\
&=\bC_\eta^* \bY_\eta +\bY_\eta \bC_\eta -2\kappa_2 \bI
\\
&=\left[\begin{array}{cc|c}
 2\bR_{0,0} -2\kappa_2\bI & 0 & -\epsilon \bJ_{1,0}^* \bJ_{2,1}^* 
\\
 0 & 2\bR_{1,1} -2\epsilon\bJ_{2,1}^* \bJ_{2,1} -2\kappa_2\bI & \epsilon\big(\bJ_{1,1} +\tfrac1{\eta} \bR_{1,1}\big) \bJ_{2,1}^* -\epsilon \bJ_{2,1}^* \bJ_{2,2} 
\\
\hline
 -\epsilon\bJ_{2,1}\bJ_{1,0} & \epsilon\bJ_{2,1} \big(-\bJ_{1,1} +\tfrac1{\eta} \bR_{1,1}\big) +\epsilon\bJ_{2,2}\bJ_{2,1} & 2\epsilon \bJ_{2,1}\bJ_{2,1}^* -2\kappa_2\bI 
 \end{array}\right]
\\
&=:\begin{bmatrix} \bV & \bW^* \\ \bW & \bU \end{bmatrix} .
\end{split}
\end{equation*}
This block operator is positive if and only if $\bU$ and the Schur complement $(\bQ_\eta -2\kappa_2\bI)/\bU =\bV -\bW^*\bU^{-1}\bW$ are positive (for the finite dimensional analog see~\cite[Theorem 1.12]{Zh05}, \cite[Prop. 10.2.5]{Be18}).
To this end we derive two conditions on $\epsilon>0$ and $\kappa_2>0$: 

On the one hand, we consider the operator $\bU =2\epsilon \bJ_{2,1}\bJ_{2,1}^* -2\kappa_2\bI \in\cB(\cH_2)$ on the finite-dimensional Hilbert space~$\cH_2$.
Since $\bJ_{2,1}:\cH_1\to\cH_2$ is nonsingular, the self-adjoint operator $\bJ_{2,1}\bJ_{2,1}^* \in\cB(\cH_2)$ is positive and satisfies $\bJ_{2,1}\bJ_{2,1}^* \geq \lambda_{\min}(\bJ_{2,1}\bJ_{2,1}^*) \bI$, where $\lambda_{\min}(\bJ_{2,1}\bJ_{2,1}^*)>0$ is the smallest eigenvalue of $\bJ_{2,1}\bJ_{2,1}^*$. Consequently,
\begin{equation} \label{LB_U}
 \bU 
=2\epsilon \bJ_{2,1}\bJ_{2,1}^* -2\kappa_2\bI
\geq 2\big(\epsilon \lambda_{\min}(\bJ_{2,1}\bJ_{2,1}^*) -\kappa_2\big) \bI .
\end{equation}
Thus, we obtain the condition
\begin{equation} \label{condition_1}
    \epsilon \lambda_{\min}(\bJ_{2,1}\bJ_{2,1}^*) -\kappa_2 > 0.
\end{equation}
To fulfill condition~\eqref{condition_1} set  
\begin{equation} \label{kappa_delta}
 \kappa_2
:= \delta \epsilon \lambda_{\min}(\bJ_{2,1}\bJ_{2,1}^*) 
\end{equation}
for some $\delta\in(0,1)$ that is fixed from now on.

Next, we check the positivity of the complement $(\bQ_\eta -2\kappa_2\bI)/\bU =\bV -\bW^*\bU^{-1}\bW$.
First, we consider $-\bW^*\bU^{-1}\bW$ and, using~\eqref{LB_U}--\eqref{condition_1}, we estimate:
\[
 -\bW^*\bU^{-1}\bW
\geq -\frac{1}{2\big(\epsilon \lambda_{\min}(\bJ_{2,1}\bJ_{2,1}^*) -\kappa_2\big)} \bW^* \bW
\geq -\frac{\epsilon^2 \omega}{2\big(\epsilon \lambda_{\min}(\bJ_{2,1}\bJ_{2,1}^*) -\kappa_2\big)} \bI,
\]
where $\omega>0$ is chosen such that 
\begin{align*}
\bW^* \bW 
&=\epsilon^2 \begin{bmatrix} -\bJ_{1,0}^* \bJ_{2,1}^* 
\\ \big(\bJ_{1,1} +\tfrac1{\eta} \bR_{1,1}\big) \bJ_{2,1}^* - \bJ_{2,1}^* \bJ_{2,2} \end{bmatrix}
 \begin{bmatrix}
    -\bJ_{2,1}\bJ_{1,0} & \bJ_{2,1} \big(-\bJ_{1,1} +\tfrac1{\eta} \bR_{1,1}\big) +\bJ_{2,2}\bJ_{2,1} 
 \end{bmatrix}
\\
&\leq \epsilon^2 \omega \bI
\end{align*}
for all $\eta\geq 1$.
Finally, we consider the Schur complement
\begin{align*}
 (\bQ_\eta -2\kappa_2\bI)/\bU 
&=\bV -\bW^*\bU^{-1}\bW
\\
&=\begin{bmatrix}
 2\bR_{0,0} -2\kappa_2\bI & 0 
\\
 0 & 2\bR_{1,1} -2\epsilon\bJ_{2,1}^* \bJ_{2,1} -2\kappa_2\bI 
 \end{bmatrix}
 -\bW^*\bU^{-1}\bW
\\
&\geq \begin{bmatrix}
 2(\gamma -\kappa_2)\bI & 0 
\\
 0 & 2(\gamma -\epsilon\lambda_{\max}(\bJ_{2,1}^* \bJ_{2,1}) -\kappa_2)\bI 
 \end{bmatrix}
 -\frac{\epsilon^2 \omega}{2\big(\epsilon \lambda_{\min}(\bJ_{2,1}\bJ_{2,1}^*) -\kappa_2\big)} \bI ,
\end{align*}
where in the last estimate we have used that $\bR_{1,1}^1>\gamma\bI$ for some $\gamma>0$.
Therefore, we have the sufficient conditions 
\begin{subequations}
\begin{align}  \label{condition_2a}
0 &< 2(\gamma -\kappa_2) -\frac{\epsilon^2 \omega}{2\big(\epsilon \lambda_{\min}(\bJ_{2,1}\bJ_{2,1}^*) -\kappa_2\big)}
\intertext{and} \label{condition_2b} 
0 &< 2(\gamma -\epsilon\lambda_{\max}(\bJ_{2,1}^* \bJ_{2,1}) -\kappa_2) -\frac{\epsilon^2 \omega}{2\big(\epsilon \lambda_{\min}(\bJ_{2,1}\bJ_{2,1}^*) -\kappa_2\big)} .
\end{align}
\end{subequations}
But condition~\eqref{condition_2b} already implies condition~\eqref{condition_2a}, since $\epsilon\lambda_{\max}(\bJ_{2,1}^* \bJ_{2,1})>0$.
With \eqref{kappa_delta}, 
the sufficient condition~\eqref{condition_2b} simplifies to 
\begin{equation} \label{condition_2}
\begin{split}
 0 
&< 2(\gamma -\epsilon\lambda_{\max}(\bJ_{2,1}^* \bJ_{2,1}) -\kappa_2) -\frac{\epsilon^2 \omega}{2\big(\epsilon \lambda_{\min}(\bJ_{2,1}\bJ_{2,1}^*) -\kappa_2\big)}
\\
&= 2\gamma -\epsilon\Big(2\lambda_{\max}(\bJ_{2,1}^* \bJ_{2,1}) +2\delta \lambda_{\min}(\bJ_{2,1}\bJ_{2,1}^*) +\frac{\omega}{2 (1-\delta) \lambda_{\min}(\bJ_{2,1}\bJ_{2,1}^*)}\Big) .
\end{split}
\end{equation}
So, altogether 
we are left with the two conditions~\eqref{condition_0} and \eqref{condition_2} for $\epsilon$.
Since $\gamma>0$, those two conditions are satisfied for any $\epsilon>0$ sufficiently small. Hence, we have proved the positivity of $\bQ_\eta -2\kappa_2 \bI$ and therefore the Lyapunov inequality \eqref{LMI}. \\

\noindent
\underline{Step 4 (Proof of the long-time behavior):} 
Consider a solution $x_\eta(t)$ of the initial value problem~\eqref{IVP_eta} 
for any $\eta\geq 1$.
Then,  using $\bY_\eta$ from~\eqref{Y_n}, the derivative of the weighted norm $\|x_\eta(t)\|_{\bY_\eta}^2$ satisfies
\begin{equation} \label{DI}
 \begin{split}
 \ddt \|x_\eta(t)\|_{\bY_\eta}^2     
&=-\langle x_\eta(t), \big(\bC^* \bY_\eta +\bY_\eta \bC\big) x_\eta(t)\rangle
\\
&\leq -2\kappa_2 \langle x_\eta(t),x_\eta(t) \rangle
 \leq -2\frac{\kappa_2}{\lambda_{\max}(\bY_\eta)} \|x_\eta(t)\|_{\bY_\eta}^2,
 \end{split}    
\end{equation}
where we have employed  the Lyapunov matrix inequality~\eqref{LMI} and the estimate $\bY_\eta\leq \lambda_{\max}(\bY_\eta) \bI$.
Then, applying Gronwall's inequality yields
\begin{equation} \label{EE_Y_n}
 \|x_\eta(t)\|_{\bY_\eta}^2
\leq e^{-2\kappa_2 t/\lambda_{\max}(\bY_\eta)} \|x_\eta(0)\|_{\bY_\eta}^2 .
\end{equation}
Using that the self-adjoint operators $\bY_\eta$, $\eta\geq 1$ satisfy
$\lambda_{\min}(\bY_\eta) \bI \leq \bY_\eta\leq \lambda_{\max}(\bY_\eta) \bI$, we infer from~\eqref{EE_Y_n} that
\begin{equation} \label{EE}
 \|x_\eta(t)\|^2_\cH
\leq \frac{\lambda_{\max}(\bY_\eta)}{\lambda_{\min}(\bY_\eta)} e^{-2\kappa_2 t/\lambda_{\max}(\bY_\eta)} \|x_\eta(0)\|^2_\cH \, .
\end{equation}
Using~\eqref{EV_Yn} in~\eqref{EE} yields the claimed estimate~\eqref{unif-decay} with 
\begin{equation}\label{lambda0+alpha}
  \lambda_\eta:=\kappa_2/\lambda_{\max}(\bY_\eta) >0
  \quad \mbox{and} \quad \alpha:=\epsilon  \sqrt{\lambda_{\max}\big(\bJ_{2,1}\bJ_{2,1}^*\big)} \in(0,\frac12].
\end{equation}
For $\eta\geq 1$, the rates $\lambda_\eta$ are bounded from below uniformly w.r.t.~$\eta$ as
\begin{equation}\label{lambda0}
 \lambda_{\eta}
=\kappa_2/\lambda_{\max}(\bY_\eta)
\geq \lambda_0
 \quad \mbox{with} \quad \lambda_0 :=\kappa_2/\Big( 1+\epsilon \sqrt{\lambda_{\max}\big(\bJ_{2,1}\bJ_{2,1}^*\big)}\Big) .
\end{equation}
\end{proof}


\subsection{Proof of short-time behavior}\label{sec:short}
The following lemma is an $\eta$-uniform extension of Lemma 2.6 in \cite{AcArMeNi25}, see also Remark \ref{equiv-HCcond}.
This result was also used in Appendix C of \cite{AcArMeNi25}, but the proof was omitted there.
\begin{lemma}\label{2HC-cond}
Let $\bC=\bR-\bJ\in \cB(\cH)$ be an accretive operator that satisfies $\|\bR\|=1$ and $\bR+\bJ\bR\bJ^*\ge\kappa_1\bI$. Then 
$$
  \mR + \bC_\eta^* \mR \bC_\eta \ge \kappa_3 \mI\qquad \mbox{ for all } \eta \geq 1
$$
with $\kappa_3=\frac{3-\sqrt5}{2} \kappa_1$.
\end{lemma}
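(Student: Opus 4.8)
The plan is to reduce the operator inequality to a scalar two-variable inequality by working with the seminorm induced by the positive operator $\bR$. Recall that $\bJ$ is skew-adjoint ($\bJ^*=-\bJ$) and $\bR$ is self-adjoint, so $\bC_\eta^* = \bR + \eta\bJ$. Set $\norm{w}_\bR := \ip{\bR w}{w}^{1/2}$, a seminorm since $\bR\ge0$. Using $\ip{\bC_\eta^*\bR\bC_\eta x}{x} = \ip{\bR\bC_\eta x}{\bC_\eta x}$, the quantity to be bounded below becomes
\[
  \ip{(\bR + \bC_\eta^*\bR\bC_\eta)x}{x} = \norm{x}_\bR^2 + \norm{\bC_\eta x}_\bR^2 .
\]
Likewise, since $\ip{\bJ\bR\bJ^* x}{x} = \norm{\bJ^* x}_\bR^2 = \norm{\bJ x}_\bR^2$, the hypothesis $\bR + \bJ\bR\bJ^*\ge\kappa_1\bI$ reads $\norm{x}_\bR^2 + \norm{\bJ x}_\bR^2 \ge \kappa_1\norm{x}_\cH^2$. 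Thus it suffices to bound $\norm{x}_\bR^2 + \norm{\bC_\eta x}_\bR^2$ below by a multiple of $\norm{x}_\bR^2 + \norm{\bJ x}_\bR^2$, uniformly in $\eta\ge1$.

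The bridge between the two expressions is the identity $\bJ x = \tfrac1\eta(\bR x - \bC_\eta x)$. The triangle inequality for $\norm{\cdot}_\bR$ together with $\eta\ge1$ gives
\[
  \norm{\bJ x}_\bR \le \tfrac1\eta\big(\norm{\bR x}_\bR + \norm{\bC_\eta x}_\bR\big) \le \norm{\bR x}_\bR + \norm{\bC_\eta x}_\bR .
\]
Here I would invoke the elementary bound $\norm{\bR x}_\bR\le\norm{x}_\bR$, which follows from $\norm{\bR x}_\bR^2 = \ip{\bR^3 x}{x} \le \ip{\bR x}{x} = \norm{x}_\bR^2$, valid because $0\le\bR\le\bI$ (as $\bR\ge0$ with $\norm{\bR}=1$) forces $\bR^3\le\bR$ by functional calculus. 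Substituting $\norm{\bJ x}_\bR\le\norm{x}_\bR + \norm{\bC_\eta x}_\bR$ into the hypothesis and abbreviating $p:=\norm{x}_\bR$, $q:=\norm{\bC_\eta x}_\bR$ yields
\[
  \kappa_1\norm{x}_\cH^2 \le p^2 + (p+q)^2 = 2p^2 + 2pq + q^2 .
\]

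It therefore remains to prove the scalar inequality
\[
  p^2 + q^2 \ge \tfrac{3-\sqrt5}{2}\,(2p^2 + 2pq + q^2) \qquad\text{for all } p,q\ge0,
\]
which, combined with the previous display, gives $\norm{x}_\bR^2 + \norm{\bC_\eta x}_\bR^2 = p^2 + q^2 \ge \tfrac{3-\sqrt5}{2}\kappa_1\norm{x}_\cH^2 = \kappa_3\norm{x}_\cH^2$ and closes the proof. This scalar step is where the constant is actually fixed and is the only genuine computational obstacle: setting $t := q/p$, one minimizes $m(t) = (1+t^2)/(2+2t+t^2)$ over $t\ge0$; the stationarity condition reduces to $t^2 + t - 1 = 0$, so the minimizer is the golden-ratio value $t^* = (\sqrt5-1)/2$, and evaluating there (using $t^{*2} = 1 - t^*$) yields $m(t^*) = (2-t^*)/(3+t^*) = (3-\sqrt5)/2$. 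One should also confirm this is the global minimum on $[0,\infty)$ by comparing with the endpoint values $m(0)=\tfrac12$ and $m(\infty)=1$, both of which exceed $(3-\sqrt5)/2\approx0.382$; this is the expected subtlety, since the whole estimate is governed by that single scalar ratio rather than by any delicate operator algebra.
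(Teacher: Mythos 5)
Your proof is correct, and it reaches the lemma by a genuinely different route than the paper. The paper works at the operator level: using the same key fact $0\le\bR^3\le\bR\le\bI$ (which you use in the equivalent form $\norm{\bR x}_\bR\le\norm{x}_\bR$), it splits off $\epsilon(\bR+\bJ^*\bR\bJ)\ge\epsilon\kappa_1\bI$ and absorbs the remaining terms into an operator square $\bA^*\bR\bA\ge0$ with $\bA=\sqrt{2-\epsilon}\,\bR-\sqrt{\eta^2-\epsilon}\,\bJ$; matching the cross terms forces $\sqrt{2-\epsilon}\sqrt{\eta^2-\epsilon}=\eta$, i.e.\ $\epsilon(\eta)=1+\eta^2/2-\sqrt{1+\eta^4/4}$, which is increasing in $\eta$, so the uniform constant is $\epsilon(1)=\tfrac{3-\sqrt5}{2}$. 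You instead stay in the $\bR$-seminorm throughout: the bridge identity $\bJ x=\tfrac1\eta(\bR x-\bC_\eta x)$, the triangle inequality, and the crude bound $\tfrac1\eta\le1$ reduce the claim to minimizing $(1+t^2)/(2+2t+t^2)$ over $t\ge0$, and your calculus (critical point $t^*=(\sqrt5-1)/2$, endpoint comparison) correctly yields the same constant $\tfrac{3-\sqrt5}{2}$ --- no coincidence, since both arguments are sharp manipulations of the same quadratic form at $\eta=1$. Your route is more elementary: no operator square completion, no $\eta$-dependent parameter to solve for, and uniformity in $\eta$ is immediate. The one thing the paper's method retains and yours discards (harmlessly, since the lemma only asserts a uniform bound) is the improvement of the constant as $\eta$ grows: the paper's estimate gives $\bR+\bC_\eta^*\bR\bC_\eta\ge\epsilon(\eta)\kappa_1\bI$ with $\epsilon(\eta)\nearrow1$, whereas your early use of $\tfrac1\eta\le1$ freezes the estimate at its $\eta=1$ worst case; keeping the factor $\tfrac1\eta$ in your triangle inequality and minimizing $(1+t^2)/\bigl(1+\eta^{-2}(1+t)^2\bigr)$ would recover this $\eta$-dependence as well.
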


\begin{proof}
The self-adjoint operators $\bR$ and $\bR^3$ have a spectral decomposition w.r.t.\ the same spectral measure. From the assumption $\|\bR\|=1$, we hence obtain $0\le\bR^3\le\bR\le\bI$ and estimate for some $\epsilon=\epsilon(\eta)\in(0,1)$:
\begin{eqnarray*}
\mR + \bC_\eta^* \mR \bC_\eta 
&\ge& \epsilon (\bR+\bJ^*\bR\bJ)
+ (2-\epsilon)\bR^3 + (\eta^2-\epsilon) \bJ^*\bR\bJ -\eta(\bJ^*\bR^2+\bR^2\bJ) \\
&\ge& \epsilon\kappa_1\bI +\big(\sqrt{2-\epsilon}\bR-\sqrt{\eta^2-\epsilon}\bJ^*\big) \bR \big(\sqrt{2-\epsilon}\bR-\sqrt{\eta^2-\epsilon}\bJ\big) 
\ge \epsilon\kappa_1\bI\,,
\end{eqnarray*}
where $\epsilon$ must satisfy $\sqrt{2-\epsilon}\sqrt{\eta^2-\epsilon}=\eta$. Hence
$$
  \epsilon(\eta)=1+\frac{\eta^2}{2} - \sqrt{1+\eta^4/4}<1\,,
$$
which is monotonically increasing. Using $\epsilon(1)=\frac{3-\sqrt5}{2}$ gives the result.
\end{proof}
\smallskip

Since the following proof is just a small variant of \cite[Appendix C]{AcArMeNi25}, we give here only the key estimates and compare them to \cite{AcArMeNi25}. First, we note that $\eta\ge1$ corresponds to $|\bn|\ge1$ in \cite{AcArMeNi25}.

\begin{proof}[Proof of Theorem~\ref{thm1} (c)]
To derive the uniform estimate \eqref{Pn-decay}, we combine for each $\eta\ge1$ a short-term decay estimate for the initial phase $[0,\frac{\tau}{\eta}]$ (which shrinks w.r.t.~increasing $\eta$) with the long-term decay estimate \eqref{unif-decay} for the remaining time interval $[\frac{\tau}{\eta},\tau]$. Considering the form of \eqref{unif-decay}, we actually have the following three phases of estimates for $\|\bP_\eta(t)\|$, where the constants will be specified below (see also Figure \ref{fig:sketch_of_proof}):

\begin{figure}
  \centering
  \includegraphics[width=0.8\textwidth]
  {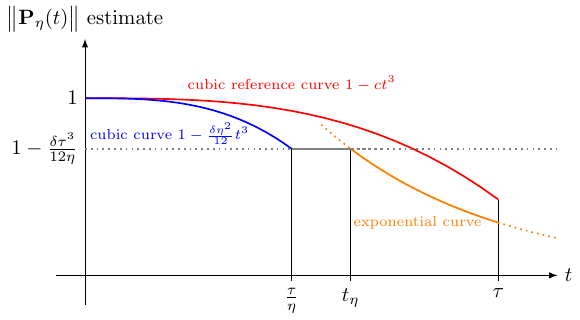}
  \caption{To derive the uniform estimate $\|\bP_\eta(t)\|_{\cB(\cH)} \le 1-c t^3$ for $0\le t\le \tau$ in~\eqref{Pn-decay}, we combine a short-term decay estimate for the initial phase $[0,\frac{\tau}{\eta}]$  (that shrinks w.r.t.\ $\eta$) with the long-term decay estimate \eqref{unif-decay} for the remaining time interval $[\frac{\tau}{\eta},\tau]$.}
  \label{fig:sketch_of_proof}
\end{figure}

\begin{enumerate}
\item algebraic estimate, obtained from Inequality (96) in \cite{AcArMeNi25}:
\begin{equation}\label{est1}
  \|\bP_\eta(t)\| \le 1 -\frac{\eta^2\delta}{12} t^3 \le 1-ct^3,\quad 0\le t\le\frac{\tau}{\eta},\quad \eta\ge1;
\end{equation}

\item constant estimate, obtained from Inequality (99) in \cite{AcArMeNi25}:
\begin{equation}\label{est2}
  \|\bP_\eta(t)\| \le 1 -\frac{\delta\tau^3}{12\eta} \le 1-ct^3, \quad \frac{\tau}{\eta} \le t\le t_\eta, \quad \eta\ge1;
\end{equation}

\item exponential estimate, obtained from Inequality\footnote{We remark that there is a typo in that inequality in \cite{AcArMeNi25}, and it is corrected here.} (100) in \cite{AcArMeNi25}:
\begin{equation}\label{est3}
  \|\bP_\eta(t)\| \le \left(1-\frac{\delta\tau^3}{12\eta}\right) \sqrt{1+\frac{2\alpha}{\eta-\alpha}} e^{-\lambda_0(t-\tfrac{\tau}{\eta})} \le 1-ct^3, \quad t_\eta\le t\le \tau, \quad \eta\ge r.
\end{equation}
\end{enumerate}

In these estimates, $\lambda_0$ and $\alpha$ were already defined in \eqref{lambda0}, \eqref{lambda0+alpha}, while $\alpha=\frac12$ in \cite{AcArMeNi25}. 
For $\eta\geq 1$, we consider the system~\eqref{IVP_eta} with $\bC_\eta =\bR -\eta\bJ$. 
By \eqref{est1b} and Lemma \ref{2HC-cond} there exist constants $\kappa_1, \kappa_3>0$ (independent of $\eta$) such that
\[ \mR + \eta^2 \bJ \mR \bJ^* \ge \kappa_1 \mI,\quad \mR + \bC_\eta^* \mR \bC_\eta \ge \kappa_3 \mI\qquad \mbox{ for all } \eta \geq 1. \]
This yields $\delta:=\min(\kappa_1/5,\kappa_3/2)$ by the same formula as in \cite{AcArMeNi25}. 

By assumption we have~$\norm{\bR}=1$ and $\|\bJ\| =: \beta$, while the analogous operators in \cite{AcArMeNi25} satisfy $\norm{\bR}=\|\bJ_{(1,0)}\|=1$. This will require minor modifications of the detailed estimates, using here $\theta:=1+\beta$. The monotonically increasing functions $\delta_1$, $\delta_3$ are now defined as
$$
   \delta_1(\tau_1) := \frac{e^{2\theta \tau_1} - 1 - 2\theta \tau_1}{\theta\tau_1}, \qquad 
   \delta_3(\tau_3):=\frac{e^{2\theta \tau_3} - 1 - 2\theta \tau_3 - 2\theta^2 \tau_3^2 - \frac{4}{3} \theta^3\tau_3^3}{\theta \tau_3^3}\,.
$$
They uniquely fix the constants $\tau_1$, $\tau_3>0$ by the equations
$$
  \delta_1(\tau_1)=\delta,\qquad \delta_3(\tau_3)=\frac{\delta}{12}\,,
$$
while $\tau_2$ is given by the same formula as in \cite{AcArMeNi25}:
$$
   \tau_2(\delta) := \frac{\sqrt{12 \delta}}{\inf\limits_{\substack{x \in \sphere_\cH\\ \| \sqrt \bR  x\| \le \sqrt{\delta}}} \| \sqrt \bR \bJ x\| + \sqrt{\delta}}\,,
$$
where $\sphere_\cH \coleq \{ x\in \cH : \norm{x}_\cH =1\}$.

The time intervals in the main estimates \eqref{est1}-\eqref{est3} are defined by 
\begin{equation}\label{def-tau1}
  \tau:=\min(\tau_1,\tau_2,\tau_3,1), \qquad t_\eta:=\frac\tau\eta +\frac{\ln\big(1+\frac{2\alpha}{\eta-\alpha}\big)}{2\lambda_0}\,,
\end{equation}
which coincides with \cite{AcArMeNi25}.

As in \cite{AcArMeNi25}, the constant $r>1$ for \eqref{est3} is uniquely defined via the equation
$$
  \sqrt{1+\frac{2\alpha}{r-\alpha}} e^{-\lambda_0\tfrac{r-1}{r}\tau}=1\,.
$$

Finally, the multiplicative constant in \eqref{Pn-decay} and \eqref{est1}-\eqref{est3} is given by
\begin{equation}\label{def-c1}
  c:= \frac{\delta}{12}\min\big[\big(1+\frac{1}{\lambda_0\tau}\big)^{-3},\frac{1}{r}]\,.
\end{equation}
Although not presented in this way, this last term also coincides with the result in \cite{AcArMeNi25}.
\end{proof}

\section{Conclusions}
For families of dissipative evolution equations, the uniform long- and short-time behavior of solutions has been studied, and decay estimates for the propagator norm are determined.

Future work will include the analysis when the hypocoercivity index is larger than $1$ and when the kernel of the Hermitian part is infinite dimensional.
\\

{\bfseries Acknowledgments. } 
The first two authors (FA, AA) were supported by the Austrian Science Fund (FWF) project \href{https://doi.org/10.55776/F65}{10.55776/F65}.


\end{document}